\setdefaultitem{\textbullet}{$\circ$}{\small *}{ \texthyphen }
\title{On rapid idempotent ultrafilters}
\author{Peter Krautzberger\thanks{Partially supported by DFG-grant KR 3818; Subject classification 03E75 (Primary) 54D80, 05D10 (Secondary)} }
\newtheorem{Proposition}{Proposition}
\newtheorem{Theorem}{Theorem}
\newtheorem{Definition}{Definition}
\begin{document}
\maketitle

This short note contains the proofs of two small but somewhat surprising results about ultrafilters on $\mathbb{N}$: 1. strongly summable ultrafilters are rapid, 2. every rapid ultrafilter induces a closed left ideal of rapid ultrafilters. As a consequence, there will be rapid minimal idempotents in all models of set theory with rapid ultrafilters.

The history of this result has been published as an experiment in mathematical writing on the author's website \cite{Krautzberger2012}, \cite{Krautzberger20122} where you can can also find additional remarks by Andreas R. Blass and Neil Hindman, offering a form of peer-review.

\section{Preliminaries}

We assume basic knowledge about ultrafilters and the algebra in the Stone-\v{C}ech compactification; the notation and conventions follow the standard monograph on the topic, \cite{HindmanStrauss}.

We work in the semigroup $(\mathbb{N},+)$ though it should be straight forward to extend the two results to countable semigroups with finite-to-one multiplication maps in general.

\begin{Definition}
\begin{itemize}
 \item An \emph{FS-set} (in $\mathbb{N}$) is a set of the form 

\[FS(x_n) = \{ \sum_{n \in s} x_n : s\in [\omega]^{<\omega}, s\neq \emptyset \} \] where $(x_n)_{n\in \omega}$ is a sequence in $\mathbb{N}$.

\item A \emph{strongly summable ultrafilter} is an ultrafilter on $\mathbb{N}$ with a base of FS-sets.

\item An ultrafilter $p$ on $\mathbb{N}$ is \emph{rapid} if for every $f:\mathbb{N} \to \mathbb{N}$, there exists $A\in p$ such that $|A \cap f(i) | < i|$ for all $i$; in other words, the natural enumeration of $A$ dominates $f$ in the usual sense of functions. We simply say that $A$ \emph{dominates} $f$.

\end{itemize}

\end{Definition}

For a detailed introduction to strongly summable ultrafilters see \cite[chapter 12]{HindmanStrauss} or \cite{Krautzberger10}. For rapid ultrafilters, see \cite{Miller1980}. It is easy to check that it suffices to consider strictly monotone $f$ to verify rapidity.

\section{Strongly summable ultrafilters are rapid.}

We need a two classical results regarding strongly summable ultrafilters. First, a result by Andreas Blass and Neil Hindman (with probably much older roots).  One of the simplest yet very fruitful ideas for studying FS-sets is to look at the minimum and maximum functions mapping $\sum_{i\in s} x_i$ to $x_{\min(s)}$ and $x_{\max(s)}$ respectively. However, this may not always be well defined since there might be many ways of writing an element in an FS-set as a sum of the generators (just consider $FS((n)_{n\in \mathbb{N}})$). The following results eliminates this concern for strongly summable ultrafilters.

\begin{Theorem}[Blass, Hindman \cite{BlassHindman87}]
If $p$ is strongly summable, then $p$ has a base of FS-sets with the following properties. If $FS(x_n)$ is a base set, then adddition ``behaves like disjoint union'', i.e.,

\[ \sum_{n \in s} x_n + \sum_{n\in t} x_n = \sum_{n \in v} x_n \Leftrightarrow s \cap t= \emptyset, s\cup t = v  \]

\[ \mbox{and } \sum_{n \in s} x_n = \sum_{n\in t} x_n \Leftrightarrow s = t .  \]

In particular, the map 

\[ \operatorname{x_n-max} :FS(x_n) \to \omega, \sum_{n \in s} x_n \mapsto x_{\max(s)}\]

is well defined.
\end{Theorem}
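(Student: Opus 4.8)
The plan is to isolate a purely arithmetic growth condition that already forces the two displayed equivalences, and then to use the ultrafilter hypothesis only to push such ``fast'' \(FS\)-sets into \(p\) cofinally. Say that a sequence \((x_n)\) is \emph{fast} if
\[ x_n > 2\sum_{i<n} x_i \qquad\text{for every } n. \]
I would first prove that every fast sequence satisfies both equivalences, and then prove that, since a strongly summable ultrafilter is idempotent (cf.\ \cite[Chapter 12]{HindmanStrauss}) and carries \(FS\)-sets cofinally, every \(A\in p\) contains a fast \(FS(x_n)\) with \(FS(x_n)\in p\). Combining these gives that the fast base sets form a base of \(p\), which is the assertion.

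For the arithmetic step, the key claim is that a fast sequence admits at most one representation of any integer in the form \(\sum_n c_n x_n\) with coefficients \(c_n\in\{0,1,2\}\). To see this I would compare two such representations \(\sum_n c_n x_n=\sum_n c'_n x_n\), take the largest index \(m\) at which the coefficients differ (say \(c_m>c'_m\)), cancel the agreeing higher terms, and observe
\[ (c_m-c'_m)\,x_m=\sum_{i<m}(c'_i-c_i)\,x_i\le 2\sum_{i<m} x_i< x_m\le (c_m-c'_m)\,x_m, \]
a contradiction. Uniqueness with coefficients in \(\{0,1\}\) is the special case giving the second equivalence (and hence that the maximum map \(\sum_{n\in s}x_n\mapsto x_{\max s}\) is well defined, since the index set \(s\) is determined by the value \(\sum_{n\in s}x_n\)); for the first equivalence one rewrites the hypothesis as \(\sum_n(\mathbf 1_s(n)+\mathbf 1_t(n))x_n=\sum_n \mathbf 1_v(n)x_n\), whose left coefficients lie in \(\{0,1,2\}\), so uniqueness forces \(\mathbf 1_s(n)+\mathbf 1_t(n)=\mathbf 1_v(n)\) for all \(n\); no coefficient equals \(2\), whence \(s\cap t=\emptyset\), and then \(s\cup t=v\).

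For the refinement step I would start from \(FS(y_n)\subseteq A\) with \(FS(y_n)\in p\), which exists by strong summability, and build a sum subsystem by recursion. Having fixed pairwise disjoint consecutive index blocks \(s_0,\dots,s_{n-1}\) and set \(x_i=\sum_{k\in s_i}y_k\), I would use the idempotent structure---passing to the sets \(B^\star=\{b\in B:-b+B\in p\}\in p\)---to locate the next block far enough out that \(x_n=\sum_{k\in s_n}y_k\) lies in the current member of \(p\) and can be chosen larger than \(2\sum_{i<n}x_i\); this last demand is met because members of \(p\) are infinite and the tails \(FS(y_k:k\ge N)\) remain in \(p\). Disjointness of the blocks gives \(FS(x_n)\subseteq FS(y_n)\subseteq A\), the Galvin--Glazer bookkeeping keeps \(FS(x_n)\in p\), and the sequence is fast by construction. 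I expect the main obstacle to be exactly this last point: a generic idempotent only yields an \(FS\)-set \emph{inside} a member of \(p\), not a member of \(p\) that is itself an \(FS\)-set, so strong summability (supplying \(FS\)-sets cofinally) and idempotency (allowing one to thin them) must be interleaved so that both the growth condition and membership in \(p\) survive the recursion.
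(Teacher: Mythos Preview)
The paper does not prove this theorem; it is quoted from \cite{BlassHindman87} and used as a black box, with only the remark that the ``disjoint union'' property is hereditary (so one FS-set in $p$ with the property already yields a full base). Your arithmetic step is correct and is exactly the standard way to extract the two displayed equivalences from a growth condition; nothing to add there.

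The refinement step, however, has a real gap. You assert that ``the tails $FS(y_k:k\ge N)$ remain in $p$'' and that ``the Galvin--Glazer bookkeeping keeps $FS(x_n)\in p$'', but neither holds in the generality you invoke. Galvin--Glazer only places $FS(x_n)$ \emph{inside} a prescribed member of $p$; it never puts $FS(x_n)$ itself into $p$, and interleaving strong summability at each stage still leaves you with a nested sequence $FS(y^{(k)}_n)\in p$ whose intersection need not be in $p$. The tail claim is subtler than it looks: from $FS(y_n)\in p$ one only gets that some translate $a_t+FS(y_n:n\ge N)$ lies in $p$ (with $a_t$ a finite sum of the first $N$ generators), and ruling out the nontrivial translates seems to require exactly the unique-representation property you are trying to establish---so the argument threatens to be circular. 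You correctly flag this as ``the main obstacle'', but you do not actually clear it.

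For comparison, the Blass--Hindman proof takes a different route: it passes through the equivalence between strongly summable ultrafilters on $\mathbb{N}$ and union ultrafilters on $[\omega]^{<\omega}$ via the binary-support map $s\mapsto\sum_{i\in s}2^i$. On the union side the ``addition is disjoint union'' property is tautological for FU-sets, and pulling back gives the base you want without ever having to force a growth condition by hand. That machinery costs more to set up, but it avoids precisely the circularity above. If you want to salvage your direct approach, the missing ingredient is an independent proof (not using unique sums) that a strongly summable ultrafilter always contains an FS-set generated by a fast sequence; one route is to first show $p$ contains some $FS(2^{a_n})$-like set by analyzing binary supports, which is essentially the Blass--Hindman idea in disguise.
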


In other words, addition restricted to these FS-sets works essentially like taking ``disjoint union'' of the indices. 

Note that the above properties are hereditary, i.e., if $FS(x_n)$ has the above property, and $FS(y_k) \subseteq FS(x_n)$, then $FS(y_k)$ has this property (because the $y_k$ must have ``disjoint support'' since all their sums  are in $FS(x_n)$) -- so (as is often the case in ultrafilter arguments) we could have asked for just one such set to be in the strongly summable ultrafilter.

The second classic result is the following due to Pierre Matet, cf. \cite[Proposition 7.5]{Matet88}.

\begin{Theorem}[Matet]
If $p$ is a strongly summable ultrafilter, then there exists $FS(x_n)\in p$ (as in the above theorem) such that $\operatorname{x_n-max}(p)$ is rapid.
\end{Theorem}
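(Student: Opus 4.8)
The plan is to fix a base set $FS(x_n)\in p$ as provided by the Blass--Hindman theorem (so that supports are well defined and addition behaves like disjoint union), to assume without loss of generality that $(x_n)$ is strictly increasing, and then to show that the pushforward $q:=\operatorname{x_n-max}(p)$ is rapid. Recall that a set $B$ lies in $q$ exactly when $(\operatorname{x_n-max})^{-1}[B]\in p$, so that the $\operatorname{x_n-max}$-image of any member of $p$ automatically lies in $q$. Since it suffices to dominate strictly increasing functions, I fix such an $f$ and look for a member of $q$ whose increasing enumeration dominates $f$.

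The main reduction is the following computation. Suppose $FS(y_k)\subseteq FS(x_n)$ is an \emph{ordered} sub-FS-set in $p$, meaning $y_k=\sum_{n\in s_k}x_n$ with the supports $s_k$ pairwise disjoint and $\max(s_k)<\min(s_{k+1})$ for all $k$. By the hereditary form of the Blass--Hindman property noted in the text, the support of $\sum_{k\in t}y_k$ is $\bigcup_{k\in t}s_k$, whose maximum is $\max(s_{\max(t)})$; hence $\operatorname{x_n-max}\bigl(\sum_{k\in t}y_k\bigr)=z_{\max(t)}$ where $z_k:=x_{\max(s_k)}$. Consequently $\operatorname{x_n-max}[FS(y_k)]$ is exactly the range $\{z_k:k\in\omega\}$, and this range lies in $q$ because $FS(y_k)\in p$. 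Since the $s_k$ are ordered, the $z_k$ are strictly increasing, so the increasing enumeration of this member of $q$ is precisely $(z_k)$. Thus it suffices to produce, inside $p$, an ordered sub-FS-set whose ``endpoints'' satisfy $z_k\ge f(k)$ for all $k$.

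To build such a sparse sub-FS-set I would run the standard Galvin--Glazer recursion adapted to the growth requirement, using that a strongly summable ultrafilter is idempotent and that all tail sets $FS(\langle x_n:n\ge m\rangle)$ belong to $p$. At stage $k$, having chosen disjoint ordered blocks $s_0,\dots,s_{k-1}$, I intersect the relevant starred set $A^{\star}=\{a\in A:-a+A\in p\}$ (with $A$ the member of $p$ currently being refined) with a tail set high enough that every element has support starting beyond $\max(s_{k-1})$ and beyond the least index $m$ with $x_m\ge f(k)$; this intersection is still in $p$, hence nonempty, and any element chosen from it provides a block $s_k$ with $\max(s_k)\ge\min(s_k)>m$, forcing $z_k=x_{\max(s_k)}\ge f(k)$. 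The usual Galvin--Glazer bookkeeping guarantees that the resulting $FS(y_k)$ is contained in $FS(x_n)$ and is a member of $p$, while the threshold choices make it ordered with $z_k\ge f(k)$. Feeding this into the reduction above yields a member of $q$ dominating $f$, and since $f$ was arbitrary, $q$ is rapid.

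I expect the delicate point to be exactly this last construction: simultaneously (i) keeping the sub-FS-set inside $p$, which forces the use of the idempotent/starred-set machinery rather than a naive diagonalization, and (ii) pushing each new block far enough to beat $f$, which relies on the tail sets remaining in $p$. Getting the sub-FS-set \emph{ordered} (not merely disjoint) is what makes the max-computation collapse to the clean range $\{z_k\}$, so I would take care to enforce $\min(s_k)>\max(s_{k-1})$ at every step; everything else is routine.
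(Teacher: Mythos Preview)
The paper does not supply its own proof of Matet's theorem; it states the result and refers the reader to \cite{Matet88}, \cite[Theorem 2]{BlassHindman87}, and \cite[Proposition 8]{Krautzberger10}. So there is no in-paper argument to compare against directly, and the question is whether your outline stands on its own.

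It does not: there is a genuine gap at the construction step. The Galvin--Glazer recursion, applied to an idempotent $p$ and a set $A\in p$, produces a sequence $(y_k)$ with $FS(y_k)\subseteq A$; it does \emph{not} guarantee $FS(y_k)\in p$. Your claim that ``the usual Galvin--Glazer bookkeeping guarantees that the resulting $FS(y_k)$ is contained in $FS(x_n)$ and is a member of $p$'' is therefore unjustified. Intersecting with starred sets and tail sets at each stage lets you control where the next $y_k$ lives and hence enforce the ordering and the growth $z_k\ge f(k)$, but nothing in that recursion forces the \emph{final} FS-set into the ultrafilter. Without $FS(y_k)\in p$ you cannot conclude that $\{z_k:k\in\omega\}\in q$, and the argument collapses.

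What is actually needed is a Ramsey-type property of strongly summable ultrafilters that goes well beyond idempotence: one must be able to diagonalize through a countable decreasing sequence of members of $p$ (here, the tail sets enforcing the thresholds) and still land on an ordered condensation whose FS-set remains in $p$. This is precisely the content of the Blass--Hindman analysis identifying the union ultrafilter associated to a strongly summable $p$ as \emph{stable} and \emph{ordered}; that is what makes the $\max$-image selective (in particular rapid). The references the paper cites establish exactly this machinery. Your reduction in the second paragraph is correct and is the right way to see why rapidity of $q$ follows once such a condensation exists, but the existence step requires the stable-ordered property, not Galvin--Glazer.
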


Since Matet uses a somewhat unusual notation, it's probably easier to read the result in \cite[Theorem 2]{BlassHindman87} (where it is not mentioned but the proof yields it) or in \cite[Proposition 8]{Krautzberger10}. But we are ready to prove the first result.

\begin{Theorem}
If $p$ is strongly summable, then $p$ is rapid.
\end{Theorem}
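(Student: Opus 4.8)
The plan is to combine the two cited structural results with the defining ``base of FS-sets'' property in a single shrinking step. First I would fix, via the Blass--Hindman theorem, a base set $FS(x_n)\in p$ with the disjoint-union property; after reordering the generators we may assume $x_0<x_1<\cdots$, so that $\operatorname{x_n-max}$ is well defined and distinct index sets give distinct sums. By Matet's theorem we may simultaneously assume $q:=\operatorname{x_n-max}(p)$ is rapid. By the remark after the definition it suffices to dominate a fixed strictly increasing $f$.

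The naive attempt --- take a rapid-sparse $B\in q$ and pass to the full preimage $\operatorname{x_n-max}^{-1}(B)\cap FS(x_n)$ --- fails, and seeing why points to the fix. The fibre of $\operatorname{x_n-max}$ over $x_k$ has $2^{k}$ elements (the sums $x_k+\sum_{n\in s}x_n$ with $s\subseteq\{0,\dots,k-1\}$), so the full preimage contains almost all of $FS(x_n)$ below any bound and is far too dense; the exponential blow-up is indexed by position in $(x_n)$, which rapidity of $q$ cannot control. The remedy is to invoke strong summability \emph{after} restricting to $B$. Set $A_0:=\operatorname{x_n-max}^{-1}(B)\cap FS(x_n)\in p$, and, since $p$ has a base of FS-sets, choose a base set $FS(z_k)\in p$ with $FS(z_k)\subseteq A_0$. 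By the hereditary form of the Blass--Hindman property each $z_k=\sum_{n\in I_k}x_n$ for pairwise disjoint finite blocks $I_k$, which, after reindexing, I arrange so that the block-maxima $v_k:=\operatorname{x_n-max}(z_k)=x_{\max I_k}$ strictly increase. Crucially $z_k\in A_0$ forces $v_k\in B$ for every $k$, so the strictly increasing sequence $(v_k)$ lies inside $B$; hence $v_k\ge b_k$, where $(b_k)$ enumerates $B$.

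Now the counting collapses. For any $N$, an element $\sum_{k\in t}z_k<N$ of $A:=FS(z_k)$ satisfies $v_{\max t}\le z_{\max t}\le\sum_{k\in t}z_k<N$, so $\max t<K(N):=|\{k:v_k<N\}|$; as there are fewer than $2^{K(N)}$ nonempty $t\subseteq\{0,\dots,K(N)-1\}$, we get $|A\cap N|<2^{K(N)}$. It therefore remains only to force $K(f(i))<\log_2 i$, and this is exactly what rapidity of $q$ buys: choosing $B$ so that its enumeration dominates $k\mapsto f(2^{k+1})$ gives $v_k\ge b_k\ge f(2^{k+1})$, so $v_k<f(i)$ implies $2^{k+1}<i$ and hence $K(f(i))<\log_2 i$ for all $i\ge 2$. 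Thus $|A\cap f(i)|<2^{K(f(i))}<i$, and the finitely many small $i$ are absorbed by deleting finitely many elements of $A$ (still in $p$, as $p$ is nonprincipal). The one delicate point is the middle step --- that restricting to $B$ first and only then applying the base-of-FS-sets property traps \emph{all} block-maxima in the sparse set $B$; once that is in place, the estimate $|A\cap N|<2^{K(N)}$ converts $q$-rapidity into $p$-rapidity by reducing the exponent from the number of \emph{generators} below $N$ to the number of \emph{blocks} below $N$.
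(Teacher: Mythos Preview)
Your argument is correct and is essentially the paper's proof: you apply rapidity of $\operatorname{x_n\text{-}max}(p)$ to the sped-up function $k\mapsto f(2^{k+1})$, refine the preimage of the resulting sparse set to an FS-set $FS(z_k)\in p$, and use that the block-maxima $v_k$ are an increasing sequence inside $B$ (hence $v_k\ge b_k\ge f(2^{k+1})$) to bound $|FS(z_k)\cap f(i)|$ by $2^{K(f(i))}<i$. The only differences are expository---your explicit discussion of why the naive full-preimage attempt fails, and your packaging of the count via $K(N)$ rather than the paper's ``maximal $y_j<f(i)$''---but the speedup, the refinement step, and the final inequality chain $f(i)>z_j\ge v_j\ge b_j\ge f(2^{j+1})$ match the paper exactly.
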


The idea is simple: we know we can dominate any function in the min-image. But of course the set that witnesses this in the pre-image might be much larger. However, we know what these sets look like (FS-sets!) and how long it takes for a new image point to appear in the image ($2^n$-steps for $n$-steps in the image). So all we have to do is speed up functions by $2^n$ and let that sped-up function be dominated in the rapid $\operatorname{x_n-max}$-image. Then we pick an FS-set in our union ultrafilter that witnesses this domination, in particular, it's generating sequence will dominate that sped-up function. Finally, the FS-set will still grow fast enough to dominate the original function.

\begin{proof}
\begin{itemize}

\item By Matet's theorem pick $FS(x_n) \in p$ such that $\operatorname{x_n-max}(p)$ is rapid.
\item Now pick any $f: \mathbb{N} \rightarrow \mathbb{N}$; we may assume that $f$ is strictly monotone.
\item By Matet's theorem we can find a set $A \in \operatorname{x_n-max}(p)$ that dominates $f \circ 2^{n +1} $.
\item Now fix $ FS(y_k) \subseteq FS(x_n), FS(y_k) \in p $ such that $\operatorname{x_n-max}[FS(y_k)] \subseteq A$.
\item Then $FS(y_k)$ dominates $f$.
\begin{itemize}
     \item Let $i\in \mathbb{N}$. We'll show that $|FS(y_k) \cap f(i)| < i$.
     \item Pick the maximal $y_j < f(i)$. 
     \item Then $f(i) \cap FS(y_k) \subseteq f(i) \cap FS(y_0,\ldots, y_j)$, i.e., we only need to find out how large $j$ is.
     \item The set $FS(y_0,\ldots, y_j)$ contains exactly  $ 2^{j+1}$-many elements
     \item So to show that it contains less than $i$-many elements, we need to prove that $ i > 2^{j+1}$.
     \begin{itemize}
     \item Of course, by choice of $j$, $f(i) > y_j$ and generally $y_j \geq \operatorname{x_n-max}(y_j)$.
     \item Observe that $\operatorname{x_n-max}[FS(y_k)] = \operatorname{x_n-max}[\{ y_k: k \in \omega\} ] \subseteq A$ (since we have "disjoint sums").
     \item Therefore any $\operatorname{x_n-max}(y_k)$ is greater or equal to the $k$-th element of $A$.
     \item Since the $A$ dominates $f\circ (2^{n +1})$, the $k$-th element of $A$ is greater than $f(2^{k+1})$. In particular, $\operatorname{x_n-max}(y_j) > f(2^{j+1})$.
     \item In total we have, $f(i) > y_j \geq \operatorname{x_n-max}(y_j) > f(2^{j+1})$.
     \item By $f$'s monotonicity, $ i > 2^{j+1}$. 
     \end{itemize}
\end{itemize}
\end{itemize}
\end{proof}

\section{Rapid ultrafilters produce rapid idempotents}

Are there are other rapid idempotents besides strongly summable ultrafilters? It turns out we can get a very strong, positive answer to this question. 

For completeness, two standard definitions.

\begin{Definition}
Let $p,q \in \beta \mathbb{N}$.
\begin{itemize}
 \item $p\otimes q$ is the ultrafilter on $\mathbb{N} \times \mathbb{N}$ generated by the sets 
\[ \{ \bigcup_{v\in V} \{ v\} \times W_v : V \in p, (W_v)_{v\in V} \mbox{ in } q\}. \]

\item $p+q$ is defined as $+ (p \otimes q)$, i.e., the image of $p\otimes q$ under the map $+$.

\end{itemize}

\end{Definition}

We get our result almost trivially from two well-known results on rapid ultrafilters.

\begin{Proposition}
For any two ultrafilters $p, q \in \beta \mathbb{N}$.
\begin{itemize}
 \item \cite[Theorem 4]{Miller1980} If $p$ is rapid, then $q\otimes p$ is rapid.
 \item (Folklore) The finite-to-one image of a rapid ultrafilter is rapid.
\end{itemize}
\end{Proposition}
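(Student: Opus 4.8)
The plan is to prove the two bullets separately, working throughout with the domination form of the definition: an ultrafilter is rapid iff for every strictly increasing $f$ there is a member $A$ with $|A\cap[0,f(i))| < i$ for all $i$ (so its increasing enumeration dominates $f$). For the first bullet I regard $q\otimes p$ as living on $\mathbb{N}\times\mathbb{N}$ via a fixed pairing $\pi\colon\mathbb{N}\times\mathbb{N}\to\mathbb{N}$ whose initial segments $\{\pi < N\}$ are finite. The second bullet, applied to the two finite-to-one bijections relating any two such pairings, shows that the resulting notion of rapidity does not depend on the choice of $\pi$, so this identification is harmless and may be used freely.

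I would dispatch the second bullet first, since it also supplies the remark just made. Let $\phi\colon\mathbb{N}\to\mathbb{N}$ be finite-to-one and $p$ rapid; I want $\phi(p)$ rapid. Given a strictly increasing $f$, the crucial point is that finite-to-oneness makes $\phi^{-1}([0,f(i)))$ finite, so $g(i) := 1+\max\phi^{-1}([0,f(i)))$ is a well-defined function on $\mathbb{N}$. Applying rapidity of $p$ to $g$ yields $A\in p$ with $|A\cap[0,g(i))| < i$ for all $i$; set $B := \phi[A]$. Then $\phi^{-1}(B)\supseteq A\in p$, so $B\in\phi(p)$, and every element of $B$ below $f(i)$ is the $\phi$-image of some element of $A\cap\phi^{-1}([0,f(i)))\subseteq A\cap[0,g(i))$, whence $|B\cap[0,f(i))|\le|A\cap[0,g(i))| < i$. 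The only content is the definability of $g$, which is exactly where the finite-to-one hypothesis is needed: without it the preimage of an initial segment could be infinite and $g$ would fail to exist.

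For the first bullet ($q\otimes p$ rapid whenever $p$ is rapid) I would build a witness directly from the fibre structure. Since $\mathbb{N}\in q$, every set $B=\bigcup_{v\in\mathbb{N}}\{v\}\times W_v$ with $W_v\in p$ lies in $q\otimes p$, so the freedom is entirely in the choice of the fibres $W_v$; the ultrafilter $q$ enters only through $\mathbb{N}\in q$, which already reflects that only the rapidity of the \emph{second} factor is used. Fix a strictly increasing $f$. Each initial segment $T_i := \{(v,w):\pi(v,w) < f(i)\}$ is finite, so it meets only finitely many fibres and within fibre $v$ occupies a finite window $R_i(v)\subseteq[0,\rho_i(v))$; then $|B\cap T_i|=\sum_v|W_v\cap R_i(v)|$, and the goal is to keep this below $i$ for every $i$.

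The main obstacle is precisely this assembly: the fibres interleave in the global $\pi$-ordering, so per-fibre thinness must be chosen with $v$-dependent strength to keep the \emph{sum} of the per-fibre contributions below each threshold within budget. I would balance two features of $p$: nonprincipality lets me push $\min W_v$ out far enough that fibre $v$ contributes nothing to $T_i$ for $i\le v$, so at stage $i$ only fibres with $v < i$ are active (fewer than $i$ of them), while rapidity of $p$ lets me make each $W_v$ so thin that its count in any window $[0,\rho_i(v))$ grows slowly enough that the surviving sum stays below $i$. Getting these two rates to cooperate — fibres entering late and each contributing little — is the whole difficulty; once the $W_v$ are fixed, checking $|B\cap T_i| < i$ is routine bookkeeping. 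Since this bullet is exactly \cite[Theorem 4]{Miller1980} one may instead simply invoke it, but I would keep the sketch to make the paper self-contained on the implication actually used: combined with the second bullet and the finite-to-one map $+\colon\mathbb{N}\times\mathbb{N}\to\mathbb{N}$, it gives that $q+p=+(q\otimes p)$ is rapid for every $q$ whenever $p$ is rapid.
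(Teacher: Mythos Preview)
The paper does not actually prove this proposition: it simply records the two facts with the citations \cite{Miller1980} and ``Folklore'' and moves on to use them. So there is no argument in the paper to compare against; your proposal supplies strictly more than the paper does.

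Your treatment of the second bullet is correct and complete. The key step---defining $g(i)=1+\max\phi^{-1}([0,f(i)))$ and observing that this is exactly where the finite-to-one hypothesis is used---is the standard folklore argument, and your verification that $B=\phi[A]$ lies in $\phi(p)$ and satisfies the required bound is clean. Your side remark that this bullet also shows rapidity of $q\otimes p$ is independent of the choice of pairing $\pi$ is a nice touch that the paper does not make explicit.

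For the first bullet you give only a sketch, and you flag it as such. The outline is the right one: use nonprincipality of $p$ to delay the entry of fibre $v$ until stage $i>v$, so that at most $i$ fibres are active at stage $i$, and use rapidity of $p$ to thin each $W_v$ enough that the per-fibre contributions sum below $i$. What is missing is the explicit bookkeeping that makes the sum work---e.g.\ a concrete choice of the function to dominate in fibre $v$ (something like $j\mapsto \rho_{2^{v+1}j}(v)$, so that fibre $v$ contributes at most $i/2^{v+1}$ to $T_i$) and the verification that these contributions sum to less than $i$. As written, ``getting these two rates to cooperate'' is asserted rather than carried out. Since the paper itself is content to cite \cite[Theorem 4]{Miller1980}, your sketch already exceeds what the paper provides; but if you want the note to be genuinely self-contained on this point, you should fill in that calculation.
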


With this in hand, we can make an easy observation.

\begin{Proposition}
If $p$ is rapid, $q$ any ultrafilter, then $q+p$ is rapid.
\end{Proposition}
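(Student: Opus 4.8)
The plan is to read the statement off directly from the two facts recorded in the preceding Proposition, chaining them together through the definition of $q+p$. First I would invoke Miller's result (the first bullet): since $p$ is rapid, the tensor product $q\otimes p$ is a rapid ultrafilter on $\mathbb{N}\times\mathbb{N}$. This step does essentially all of the real work, so the only remaining task is to transport rapidity along the addition map.

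Next I would recall that, by the Definition above, $q+p = +(q\otimes p)$ is by definition the image of $q\otimes p$ under the map $+\colon \mathbb{N}\times\mathbb{N}\to\mathbb{N}$, $(a,b)\mapsto a+b$. To apply the folklore result (the second bullet), the one thing that must be checked is that $+$ is finite-to-one, i.e., that each fiber $\{(a,b) : a+b = n\}$ is finite; indeed this fiber has at most $n+1$ elements. Combining the two bullets, $q+p$ is then the finite-to-one image of the rapid ultrafilter $q\otimes p$, and is therefore rapid.

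I do not expect any genuine obstacle here, which matches the author's billing of this as an ``easy observation.'' The only point deserving a word of justification is the finite-to-one property of addition on $\mathbb{N}$, and this is precisely the hypothesis that makes the folklore result applicable — consistent with the earlier remark that these results extend to countable semigroups with finite-to-one multiplication maps. All of the substance is hidden inside Miller's theorem.
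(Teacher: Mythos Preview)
Your proposal is correct and follows exactly the same approach as the paper's proof: apply Miller's theorem to get that $q\otimes p$ is rapid, observe that $q+p$ is the image of $q\otimes p$ under addition, note that addition is finite-to-one, and conclude via the folklore result. The only difference is that you spell out why addition on $\mathbb{N}$ is finite-to-one, which the paper leaves implicit.
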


\begin{proof}
\begin{itemize}
\item Since $p$ is rapid, $q\otimes p$ is rapid.
\item Then $q+p = +(q\otimes p)$ and addition is a finite-to-one map.
\item Hence $q+p$ is rapid.
\end{itemize}
\end{proof}

Then we can state our theorem as a trivial corollary. For the definition of minimal idempotent, see \cite[Chapter 1]{HindmanStrauss}.

\begin{Theorem}
If there exists a rapid ultrafilter $p$, then there exist rapid idempotent ultrafilters. More precisly, then $\beta \mathbb{N} + p$  contains only rapid ultrafilters, in particular minimal idempotents which are rapid.
\end{Theorem}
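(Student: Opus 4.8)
The plan is to deduce everything from Proposition 2 together with the standard structure theory of the compact right-topological semigroup $(\beta\mathbb{N},+)$ as developed in \cite[Chapter 1]{HindmanStrauss}; the work has essentially already been done, and what remains is to assemble the pieces.

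First I would observe that $\beta\mathbb{N} + p$ consists \emph{only} of rapid ultrafilters. Every element has the form $q + p$ for some $q \in \beta\mathbb{N}$, and Proposition 2 states precisely that each such $q + p$ is rapid. This settles the ``contains only rapid ultrafilters'' half of the claim with no further argument, and it has the useful consequence that \emph{any} idempotent we can locate inside $\beta\mathbb{N} + p$ will automatically be rapid.

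Next I would verify that $\beta\mathbb{N} + p$ is a closed left ideal of $(\beta\mathbb{N},+)$. It is a left ideal by associativity: for $r \in \beta\mathbb{N}$ and $q + p \in \beta\mathbb{N} + p$ we have $r + (q+p) = (r+q) + p \in \beta\mathbb{N} + p$. It is closed because $\beta\mathbb{N} + p$ is the image of the compact space $\beta\mathbb{N}$ under the right translation $\rho_p : q \mapsto q + p$, which is continuous since $(\beta\mathbb{N},+)$ is right-topological; the continuous image of a compact set is compact, hence closed.

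Finally I would invoke the structure theory. A closed left ideal always contains a minimal left ideal $L$, and such an $L$ is itself closed, hence a compact right-topological semigroup (it is a subsemigroup since $L + L \subseteq \beta\mathbb{N} + L \subseteq L$). Ellis's theorem then provides an idempotent $e \in L$, and because $L$ is a \emph{minimal} left ideal, $e$ is a minimal idempotent. Since $e \in L \subseteq \beta\mathbb{N} + p$, the first step guarantees that $e$ is rapid, yielding the desired rapid minimal idempotent. I do not expect any genuine obstacle here; the only point needing a moment of care is the appeal to the facts that every closed left ideal contains a minimal left ideal and that a minimal left ideal carries an idempotent, but these are exactly the basic results of the algebra of $\beta\mathbb{N}$, so the theorem is indeed a trivial corollary of Proposition 2.
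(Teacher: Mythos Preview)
Your proposal is correct and matches the paper's intent: the paper does not write out a proof at all, simply declaring the theorem a ``trivial corollary'' of Proposition~2 and referring to \cite[Chapter~1]{HindmanStrauss} for minimal idempotents. Your three steps---Proposition~2 gives that every element of $\beta\mathbb{N}+p$ is rapid, $\beta\mathbb{N}+p$ is a closed left ideal, and the standard structure theory then supplies a minimal idempotent inside it---are exactly the details the paper leaves implicit.
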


As an afterthought, let us go back to strongly summable ultrafilters. By an old result, cf. \cite[Theorem 12.39]{HindmanStrauss}, strongly summable ultrafilters are right-maximal idempotents (in the usual order $p\leq_R q \Leftrightarrow p+q =p$). So in a model of set theory with strongly summable ultrafilters, we find the "full spectrum" of rapid idempotents in that partial order -- from (strongly) right maximal all the way to (right) minimal.



\end{document}